\newtheorem{proposition}{Proposition}[section]
\newtheorem{theorem}{Theorem}[section]
\newtheorem{example}{Example}[section]
\newtheorem{lemma}{Lemma}[section]
\newcommand{\npmatrix}[1]{\left( \begin{matrix} #1 \end{matrix} \right)}
    \definecolor{helena}{rgb}{.2,.8,.4}
    \definecolor{polona}{rgb}{.8,.2,.2}
   \definecolor{todo}{rgb}{.2,.2,.8}
\begin{document}
\title{The integer cp-rank of $2 \times 2$ matrices}
\author{Thomas Laffey, Helena \v Smigoc}
%
\address{H.\v Smigoc:~School of Mathematical Sciences, University College Dublin, Belfield, Dublin 4, Ireland; email: helena.smigoc@ucd.ie}
\address{T.J.~Laffey:~School of Mathematical Sciences, University College Dublin, Belfield, Dublin 4, Ireland; email: thomas.laffey@ucd.ie}
%
\bigskip

\maketitle

\begin{abstract}
We show the cp-rank of an integer doubly nonnegative $2 \times 2$ matrix does not exceed $11$. 

\noindent\emph{Key words:} completely positive matrices, doubly nonnegative matrices, integer matrices. 
	MSC[2010] 15B36, 	15B48.
\end{abstract}

\quad

\emph{Dedicated to Charles R. Johnson. }

\section{Introduction}

A $n\times n$ matrix $A$ is said to be \emph{completely positive}, if there exists a (not necessarily square) nonnegative matrix $V$ such that $A=VV^T$. 
Completely positive matrices have been widely studied, and they play an important role in various applications. It is a subject to which C.R. Johnson has made an important contribution \cite{MR1823345,MR1770363, MR1638954,MR1395610}.  For further background on completely positive matrices, we refer the reader to the following works and citations therein \cite{MR1986666,MR3414584,MR2892529, MR2845851}.

Clearly, any completely positive matrix is nonnegative and positive semidefinite. We call the family of matrices that are both nonnegative and positive semidefinite \emph{doubly nonnegative}. Doubly nonnegative matrices of
order less than $5$ are completely positive \cite{MR0174570}. However, this is no longer true for matrices of order larger than or equal to $6$ \cite{MR0111694}.

Any $n \times n$ completely positive matrix $A$ has many \emph{cp-factorizations} of the form $A=VV^T,$ where $V$ is an $n \times m$ matrix. Note that $m$ is also not unique. We define the \emph{cp-rank} of $A$ to be the minimal possible $m$. If we demand that $V$ has rational entries, then we say that $A$ has \emph{a rational cp-factorization}. We define the \emph{rational cp-rank} correspondingly. In this note we will study \emph{integer cp-factorizations}, where we demand $V$ to be an integer nonnegative matrix, and \emph{integer cp-rank}, the minimal number of columns in the integer cp-factorization of a given matrix.

Every rational matrix which lies in the interior of the cone of completely positive matrices has a rational cp-factorization \cite{MR3624664}, but the question is still open for rational matrices on the boundary of the region. On the other hand, for $n \geq 3$ it it easy to find examples of $n \times n$ integer completely positive matrices that do not have an integer cp-factorization. In \cite{MR3904097}  the authors answered a question posed in \cite{Berman2}, by proving that for $n=2$ every integer doubly nonnegative matrix has an integer cp-factorization. An alternative proof of this result can be found in \cite{sikiri2018simplex}.  Neither of those proofs offer a bound on the integer cp-rank of such matrices. In this note we prove that the integer cp-rank of $2 \times 2$ matrices cannot be larger than $11$.

\section{Main Result}

The question of determining the completely positive integer rank for a given $n\times n$ completely positive matrix is not trivial, even in the case when $n=1$. In this case the answer is given by Lagrange's Four-Square Theorem. 

\begin{theorem}\label{thm:foursquares}[Lagrange's Four Square Theorem]
Every positive integer $x$ can be written as the sum of at most four squares. If $x$ is not of the form 
\begin{equation}\label{eq:four}
x=4^r(8k+7)
\end{equation}
 for some nonnegative integers $r$ and $k$, then $x$ is the sum of at most three squares. 
\end{theorem}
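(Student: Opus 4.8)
This is a classical result; I outline the proof I would give, treating the two assertions separately since they are of rather different natures.

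\emph{Four squares.} I would run the Euler--Lagrange descent. The algebraic input is Euler's four-square identity
\[
\Bigl(a_1^2+a_2^2+a_3^2+a_4^2\Bigr)\Bigl(b_1^2+b_2^2+b_3^2+b_4^2\Bigr)=c_1^2+c_2^2+c_3^2+c_4^2,
\]
in which the $c_i$ are explicit integral bilinear forms in the $a_i,b_i$ (norm-multiplicativity for the quaternions). Hence sums of four squares are closed under multiplication, and since $1=1^2$ and $2=1^2+1^2$ it suffices to represent every odd prime $p$. A counting argument gives the first step: the sets $\{x^2\bmod p:0\le x\le (p-1)/2\}$ and $\{-1-y^2\bmod p:0\le y\le (p-1)/2\}$ each contain $(p+1)/2$ residues, so they meet, yielding $x,y$ with $x^2+y^2+1^2+0^2=mp$ for some $1\le m<p$. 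Then one descends: let $m$ be minimal with $mp=x_1^2+x_2^2+x_3^2+x_4^2$ and suppose $m>1$; replacing the $x_i$ by their absolutely least residues $y_i$ modulo $m$ gives $y_1^2+y_2^2+y_3^2+y_4^2=mr$ with $0\le r<m$ (the degenerate cases $r=0$, $r=m$, and $m$ even require short checks, all using that $p$ is prime), and then Euler's identity applied to $(mp)(mr)$, after verifying each resulting $c_i$ is divisible by $m$, exhibits $pr$ as a sum of four squares with $r<m$, contradicting minimality. So $m=1$ and $p$ is a sum of four squares.

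\emph{Three squares.} Necessity is easy: squares are $\equiv 0,1,4\pmod 8$, so no sum of three squares is $\equiv 7\pmod 8$, which kills the case $r=0$; and if $4^r(8k+7)$ with $r\ge 1$ were $a^2+b^2+c^2$ then reduction mod $4$ forces $a,b,c$ even, and dividing by $4$ yields a smaller instance, so induction on $r$ finishes. Sufficiency is the hard direction, and the place where I expect the real obstacle, since no elementary descent is available. I would reduce (pulling out powers of $4$, handling $x\in\{1,2\}$ by hand) to representing a squarefree $n\not\equiv 7\pmod 8$, check that $x^2+y^2+z^2$ represents $n$ over $\mathbb{R}$ and over every $\mathbb{Z}_p$ --- a finite local computation in which the excluded residue class modulo $8$ is precisely the obstruction at $p=2$ --- and then invoke the fact that the genus of $x^2+y^2+z^2$ contains a single class, so that everywhere-local representability implies representability over $\mathbb{Z}$. (Alternatively one can give Dirichlet's older argument, combining Dirichlet's theorem on primes in arithmetic progressions with Gauss composition of binary quadratic forms to build a suitable auxiliary ternary form.) Proving the one-class-per-genus statement for $x^2+y^2+z^2$ is the crux; everything else is bookkeeping.
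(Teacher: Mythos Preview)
Your outline is a correct sketch of the standard proofs of Lagrange's four-square theorem and Legendre's three-square theorem, and the places you flag as delicate (the degenerate cases in the descent, and the one-class-per-genus fact for $x^2+y^2+z^2$) are indeed where the work lies.

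However, there is nothing to compare against: the paper does not prove this theorem. It is stated as a classical named result (Lagrange's Four Square Theorem, with the three-square refinement due to Legendre) and used as a black box throughout the rest of the note. So your proposal is not an alternative to the paper's proof but rather a proof where the paper offers none; in the context of this paper the appropriate ``proof'' would simply be a citation to a standard number-theory reference.
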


With Theorem \ref{thm:foursquares}, rank one  matrices are easy to analyse. 

\begin{lemma}\label{lem:rank1}
An integer doubly nonnegative matrix $A$ of rank $1$ is completely positive, and has the integer cp-rank equal to the integer cp-rank of the greatest common divisor of its diagonal elements. 
\end{lemma}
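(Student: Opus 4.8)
The plan is to reduce a rank-one integer doubly nonnegative matrix to the canonical form $A = m\,uu^T$, where $u$ is a nonnegative integer column vector whose nonzero entries have greatest common divisor $1$ and $m$ is a positive integer, and then to match up the integer cp-factorizations of $A$ with the representations of $m$ as sums of squares.

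First I would apply the spectral theorem: a rank-one positive semidefinite matrix equals $vv^T$ for some nonzero real vector $v$, and the nonnegativity of $A$ lets us take $v\ge 0$ (replacing $v$ by $-v$ if necessary); in particular $A$ is completely positive. For any indices $i,j$ with $v_i,v_j\ne 0$ the ratio $v_i/v_j = (v_iv_j)/v_j^2 = A_{ij}/A_{jj}$ is rational, so $v = \sqrt{m}\,u$ for some real $m>0$ and some such vector $u$. Fixing integers $c_i$ with $\sum_i c_iu_i = 1$ (Bézout), and using $A_{ij} = m\,u_iu_j\in\mathbb{Z}$ for all $i,j$, we get $m = \sum_{i,j}c_ic_jA_{ij}\in\mathbb{Z}_{>0}$. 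Since the $\gcd$ of the nonzero $u_i$ is $1$, so is the $\gcd$ of the nonzero $u_i^2$, whence $\gcd(A_{11},\dots,A_{nn}) = m\,\gcd(u_1^2,\dots,u_n^2) = m$ (zero entries do not affect the $\gcd$, as $u\ne 0$).

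It remains to compare integer cp-ranks. In one direction, every representation $m = \sum_{j=1}^{k}w_j^2$ with $w_j\in\mathbb{Z}_{\ge 0}$ gives an integer cp-factorization $A = VV^T$ with $V = uw^T$ an $n\times k$ nonnegative integer matrix, where $w=(w_1,\dots,w_k)^T$. Conversely, from an integer cp-factorization $A = VV^T$ with $V$ an $n\times k$ nonnegative integer matrix I would use $\rk V = \rk(VV^T) = 1$ to write $V = ab^T$ with $a,b\ge 0$; then $A = \|b\|^2aa^T = m\,uu^T$ forces $a = tu$ for some real $t>0$, and $w_j := tb_j = \sum_i c_iV_{ij}$ is a nonnegative integer, so $V = uw^T$ and $m = \sum_{j=1}^{k}w_j^2$. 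Therefore the integer cp-rank of $A$ equals the least number of squares of nonnegative integers summing to $m$, which is exactly the integer cp-rank of the $1\times1$ matrix $[m] = [\gcd(A_{11},\dots,A_{nn})]$.

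The steps I expect to need the most care are the two ``content-clearing'' arguments — that $m$ is genuinely a positive integer, and that an arbitrary integer cp-factorization of $A$ is forced into the form $uw^T$ — since both rely on combining the primitivity of $u$ with the rank-one structure. Everything else (including the observation that dropping zero columns shows the minimal column count equals the minimal number of positive squares summing to $m$, hence at most $4$ by Lagrange's Four Square Theorem) is routine bookkeeping.
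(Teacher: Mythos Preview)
Your argument is correct and in fact more complete than the paper's. The paper proceeds in the opposite order: it first sets $d=\gcd(a_{11},\dots,a_{nn})$, observes that $d$ divides every $a_{ij}=\sqrt{a_{ii}a_{jj}}$, writes $A=dB$ with $\gcd(b_{ii})=1$, and then uses a squarefree--part argument to show each $b_{ii}$ is a perfect square, whence $B=uu^T$. You instead start from the rank-one decomposition $A=vv^T$, use rationality of the ratios $v_i/v_j$ to extract a primitive integer vector $u$ with $v=\sqrt{m}\,u$, and recover $m\in\mathbb{Z}_{>0}$ and $m=\gcd(A_{ii})$ via B\'ezout. Both routes yield $A=m\,uu^T$ with $u$ primitive; yours trades the elementary prime-divisibility step for a slightly more structural linear-algebra/B\'ezout argument.

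What your approach buys is the converse inequality: the paper's proof stops once $A=d\,uu^T$ is established, which only gives integer cp-rank of $A$ at most that of $d$, whereas you explicitly show that any integer cp-factorization $A=VV^T$ forces $V=uw^T$ with integer $w$ (via $\rk V=1$ and the B\'ezout coefficients), hence $d=\sum w_j^2$. That step is needed for the \emph{equality} asserted in the lemma and is absent from the paper's write-up.
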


\begin{proof}
Let $A=(a_{ij})$ be an integer doubly nonnegative matrix of rank $1$, and let $d:=\gcd(a_{11},a_{22},\ldots,a_{nn})$. Since $a_{ij}=\sqrt{a_{ii}a_{jj}}$, $d$ also divides all the off-diagonal elements of $A$. Hence, $A=dB$, where $B=(b_{ij})$ satisfies $\gcd(b_{11},b_{22},\ldots,b_{nn})=1$.

We claim that each diagonal element in $B$ is a perfect square. If this is not true, then $b_{i_0i_0}=p c_{i_0}^2$, where $p$ is a product of distinct primes. Now $b_{i_0j}^2=b_{i_0i_0}b_{jj}=pc_{i_0}^2 b_{jj}$. Hence $b_{jj}=p c_j^2$ for some positive integer $c_j$ and for all $j$. This implies that $p$ divides $b_{jj}$ for all $j$. As this contradicts our assumption, the claim is proved. 
\end{proof}

Proposition below not only gives the first bound on the integer cp-rank of $2 \times 2$ matrices, but it also provides an approach that is later refined to improve the bound.

\begin{proposition}
A $2 \times 2$ integer doubly nonnegative matrix has an integer cp-factorization, and an integer cp-rank less than or equal to $12$. 
\end{proposition}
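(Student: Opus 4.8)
The plan is to exhibit $A$ as a sum of at most $12$ matrices of the form $vv^T$ with $v$ a nonnegative integer column vector. Begin with the easy reductions. If $\mathrm{rank}\,A\le 1$, then Lemma \ref{lem:rank1} together with Theorem \ref{thm:foursquares} already give integer cp-rank at most $4$. So assume $\mathrm{rank}\,A=2$, write $A=\left(\begin{smallmatrix}a&b\\ b&c\end{smallmatrix}\right)$ with $a,b,c$ nonnegative integers and $ac>b^2$, and, interchanging the two coordinates if necessary, assume $a\le c$; note $a\ge 1$. If $b=0$ then $A=\mathrm{diag}(a,c)$, and writing each of $a$ and $c$ as a sum of at most four squares (Theorem \ref{thm:foursquares}) gives a factorization with at most $8$ columns. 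So from now on $b\ge 1$.

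The key step is the following claim: \emph{there are nonnegative integers $p_1,\dots,p_4,q_1,\dots,q_4$ with $\sum_{i=1}^4 p_iq_i=b$, $\sum_{i=1}^4 p_i^2\le a$ and $\sum_{i=1}^4 q_i^2\le c$.} Granting this, set $v_i=(p_i,q_i)^T$. Since $\sum_{i=1}^4 v_iv_i^T=\left(\begin{smallmatrix}\sum p_i^2&b\\ b&\sum q_i^2\end{smallmatrix}\right)$, we get
\[
A-\sum_{i=1}^4 v_iv_i^T=\mathrm{diag}\!\Bigl(a-\textstyle\sum_{i=1}^4 p_i^2,\ c-\sum_{i=1}^4 q_i^2\Bigr),
\]
which is a nonnegative diagonal integer matrix by the claim; expressing each of its two diagonal entries as a sum of at most four squares then produces an integer cp-factorization of $A$ with at most $4+4+4=12$ columns.

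It remains to establish the claim. If $b\le a$ it is immediate: take $p_i=q_i=t_i$, where $b=t_1^2+t_2^2+t_3^2+t_4^2$ by Theorem \ref{thm:foursquares}; then $\sum p_i^2=\sum q_i^2=b\le a\le c$ and $\sum p_iq_i=b$. The substantive case is $a<b$, in which necessarily $c\ge b^2/a>b$. Here a single product need not suffice (for instance, when $b$ is a prime exceeding both $\sqrt a$ and $\sqrt c$ there is no factorization $b=p_1q_1$ with $p_1^2\le a$ and $q_1^2\le c$), so $b$ must be assembled from several products. The device I would use is to choose an integer $a'$ with $b^2/c\le a'\le a$, together with a four-square representation $a'=s_1^2+s_2^2+s_3^2+s_4^2$ for which the linear equation $s_1q_1+s_2q_2+s_3q_3+s_4q_4=b$ is solvable in nonnegative integers, set $p_i=s_i$, and take the $q_i$ close to the continuous optimum $q_i\approx(b/a')s_i$, at which $\sum q_i^2$ attains the value $b^2/a'\le c$.

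Carrying this out rigorously is where essentially all the work lies, and it is the step I expect to be the main obstacle. One has to select the representation of $a'$ with care --- exploiting both the flexibility in Lagrange's theorem (the representation of $a'$ is far from unique, and different choices give radically different lattices of solutions) and the slack $ac-b^2\ge 0$ --- so that the $q_i$ can be taken as nonnegative integers satisfying $s_1q_1+\dots+s_4q_4=b$ exactly while keeping $\sum q_i^2$ within the budget $c$; the cases of small $a$ (where all $s_i$ may be taken in $\{0,1,2\}$) and large $a$ need to be treated somewhat differently, and the rounding estimate is delicate near the extreme configurations $ac=b^2$. This same construction, sharpened so that only three products are required, is what I would expect to yield the improved bound $11$ in the main theorem.
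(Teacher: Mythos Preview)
Your argument in the case $b\le a$ is complete and is essentially the paper's: taking $p_i=q_i=t_i$ with $b=\sum t_i^2$ amounts precisely to the decomposition
\[
A=b\npmatrix{1&1\\1&1}+\npmatrix{a-b&0\\0&c-b},
\]
followed by Lagrange on each diagonal entry.

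The gap is the case $a<b$, which you yourself identify as ``the main obstacle'' and do not finish. You propose to choose $a'\in[b^2/c,\,a]$, write $a'=\sum s_i^2$, and then find nonnegative integers $q_i$ with $\sum s_iq_i=b$ and $\sum q_i^2\le c$; but you give no proof that such $q_i$ exist, only a heuristic (``take $q_i\approx(b/a')s_i$'') together with an acknowledgement that the rounding is delicate. As written this is not a proof, and it is not clear the strategy can be made to work without substantial further argument.

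The paper sidesteps this case entirely by a device you are missing: a unimodular reduction. If $b>c$, write $b=\alpha c+b_0$ with $0\le b_0<c$ and set $S=\left(\begin{smallmatrix}1&-\alpha\\0&1\end{smallmatrix}\right)$. Then $A_0:=SAS^T$ is again integer doubly nonnegative (same determinant, and one checks $a_0\ge 0$), with off-diagonal entry $b_0<c_0=c$. Crucially $S^{-1}=\left(\begin{smallmatrix}1&\alpha\\0&1\end{smallmatrix}\right)$ is a \emph{nonnegative integer} matrix, so any integer cp-factorization $A_0=B_0B_0^T$ transports to one of $A$ with the same number of columns via $A=(S^{-1}B_0)(S^{-1}B_0)^T$. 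Repeating on whichever side still violates the inequality (the off-diagonal entry strictly decreases, so this terminates) reduces to $b\le\min(a,c)$, where your own argument already gives the bound $12$. Your entire Diophantine detour is unnecessary once you have this reduction.
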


\begin{proof} Let $$A=\npmatrix{a & b \\ b& c}$$ be an integer doubly nonnegative matrix. 
First we prove that we can reduce our problem to the case when $a \geq b$ and $c \geq b$. To this end we assume $b>c$, and write $b=\alpha c+b_0$, where $\alpha \geq 1$ is a positive integer, and $b_0 \in \{0,\ldots, c-1\}$. Let $$S(\alpha):=\npmatrix{1 & -\alpha \\ 0& 1}.$$ We claim that 
$$A_0=S(\alpha)AS(\alpha)^T=\npmatrix{a-2 \alpha b +\alpha^2 c & b-\alpha c \\ b-\alpha c & c}=\npmatrix{a_0 & b_0 \\ b_0 & c_0}$$ 
is a doubly nonnegative matrix. From $\det S(\alpha)=1$, we deduce $\det A_0=\det A \geq 0$. Now,  the inequalities
$\det A= a_0 c_0-b_0^2\geq 0$ and $c_0=c >0$, imply that $a_0=(a-2 \alpha b_0-\alpha c)\geq 0$. 

Since $S(\alpha)^{-1}>0$, any completely positive factorization of $A_0$:  $A_0=B_0B_0^T$, gives us a completely positive factorization of $A$: $$A=(S(\alpha)^{-1}B_0)(S(\alpha)^{-1}B_0)^T.$$ Clearly, $B_0$ and $(S(\alpha)^{-1}B_0)$ have the same number of columns, hence the two factorizations give the same bound on the cp-rank. With this we have proved, that to find an integer completely positive factorization for $A$ it is sufficient to solve the problem for $A_0$, that satisfies $a_0 \geq b_0$ and $c_0\geq b_0$. If $b>a$, we can repeat the above argument, with the roles of the diagonal elements reversed. 

 From now on we may assume that our given matrix $A$ satisfies $a \geq b$ and $c \geq b$.  Under this assumption we can write: 
\begin{align}\label{eq:bound12}
A&=\npmatrix{b & b \\ b & b}+\npmatrix{a-b & 0 \\ 0 & c-b}\\
&=b\npmatrix{1 & 1 \\ 1 & 1}+\npmatrix{a-b & 0 \\ 0 & 0}+\npmatrix{0 & 0 \\ 0 & c-b}. 
\end{align}
By Lemma \ref{lem:rank1} each of the rank $1$ matrices in the above sum have the integer cp-rank at most $4$, so the integer cp-rank of $A$ is at most $12$. 
\end{proof}

To reduce the bound for the cp-rank to $11$ we look more closely at the family of integers that cannot be written as a sum of less than four squares. 

\begin{lemma}\label{lem:three}
Let $x$ be a positive integer of the form \eqref{eq:four}. Then $x-2$, $x-6$, $x+2$ and $x+6$ are not of the form  \eqref{eq:four}. 
\end{lemma}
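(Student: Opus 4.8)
The plan is to exploit the fact that an integer of the form \eqref{eq:four} must satisfy a restrictive congruence, and then to check case by case that none of the four shifted values can meet it.

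First I would isolate the needed necessary condition: if $n = 4^r(8k+7)$ with $r,k \ge 0$, then either $r = 0$, in which case $n \equiv 7 \pmod 8$, or $r \ge 1$, in which case $4 \mid n$. In particular, no integer congruent to $2 \pmod 4$ is of the form \eqref{eq:four}, and an \emph{odd} integer of the form \eqref{eq:four} must be $\equiv 7 \pmod 8$.

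Next I would split on the residue of the given $x$, using the condition just recorded applied to $x$ itself (note $x\ge 7$, so all four numbers $x\pm2,x\pm6$ are positive integers). If $x \equiv 7 \pmod 8$, then $x-2 \equiv 5$, $x+2 \equiv 1$, $x-6 \equiv 1$ and $x+6 \equiv 5 \pmod 8$; all four are odd and none is $\equiv 7 \pmod 8$, so by the necessary condition none is of the form \eqref{eq:four}. If instead $4 \mid x$, then each of $x-2$, $x+2$, $x-6$, $x+6$ is $\equiv 2 \pmod 4$, and again by the necessary condition none is of the form \eqref{eq:four}. Since every $x$ of the form \eqref{eq:four} falls into one of these two cases, the lemma follows.

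I do not expect a genuine obstacle here; the only subtlety is to use the congruence condition only in the necessary direction, since it is not sufficient (for instance $12 \equiv 4 \pmod 8$ is a sum of three squares), but this causes no trouble because we only ever need to rule numbers out. One could also run the whole argument modulo $8$ uniformly, observing that the four values collapse to the two residues $x \pm 2 \pmod 8$.
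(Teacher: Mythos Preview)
Your proof is correct and follows essentially the same approach as the paper. The only cosmetic difference is that the paper splits into three cases according to $r=0$, $r=1$, and $r\ge 2$ and works entirely modulo~$8$, whereas you merge the latter two into the single case $4\mid x$ and argue modulo~$4$ there; the underlying idea---that the residue of $x$ forces $x\pm2$ and $x\pm6$ to miss the necessary congruence condition---is identical.
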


\begin{proof}
 Let $x=4^r(8k+7)$ for some nonnegative integers $r$ and $k$. Then: 
\begin{align*}
x &\equiv 7 \pmod{8} \text{ when } r=0, \\
x &\equiv 4 \pmod{8} \text{ when } r=1, \\
x &\equiv 0 \pmod{8} \text{ when } r\geq 2.  \\
\end{align*}
In each case, it is straightforward to check that $x-6$, $x-2$, $x+2$, $x+6$ are not equivalent to $7$, $4$ or $0$ modulo $8$, so they cannot be of the form \eqref{eq:four}. 
\end{proof}

\begin{theorem}
Let $$A=\npmatrix{a & b \\ b& c}$$ be an integer doubly nonnegative matrix. Then $A$ has an integer cp-factorization, and an integer cp-rank less than or equal to $11$. 
\end{theorem}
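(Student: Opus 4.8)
The plan is to mimic the proof of the Proposition, but to be more economical in the final rank-one decomposition. As there, by conjugating with the unimodular matrices $S(\alpha)$ we may assume $a \geq b$ and $c \geq b$. The wasteful step in the Proposition is the decomposition $A = b\,J + (a-b)E_{11} + (c-b)E_{22}$, where $J$ is the all-ones matrix: this costs $4$ (for $J$, via $\gcd = 1$ — actually $J$ itself is rank one with diagonal $\gcd 1$, so cp-rank $1$; wait, more carefully $b J$ has diagonal $\gcd$ equal to $b$) plus $4 + 4$ for the two diagonal pieces, giving $12$. To save one, I would instead try to write $A$ as a sum of a rank-one integer doubly nonnegative matrix whose diagonal $\gcd$ is \emph{not} of the form \eqref{eq:four}, plus two rank-one diagonal pieces that are also not of that bad form; then Theorem \ref{thm:foursquares} gives $3+3+3 = 9$, and even allowing one of the three to be bad still yields $3+3+4=10 \leq 11$. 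Alternatively, keeping the splitting $bJ + (a-b)E_{11} + (c-b)E_{22}$, it suffices to arrange (after possibly modifying the off-diagonal representative) that at least two of the three integers $b$, $a-b$, $c-b$ are not of the form \eqref{eq:four}.

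Concretely, here is the mechanism I expect to use. Suppose the naive bound fails, i.e.\ all of $b$, $a-b$, $c-b$ are of the form \eqref{eq:four}. Replace the central all-ones block by a nearby rank-one block: for a small integer $t$ with $0 < t \le b$ consider
\begin{equation*}
A = \npmatrix{t & t \\ t & t} + \npmatrix{a-t & b-t \\ b-t & c-t},
\end{equation*}
and note the second summand is still doubly nonnegative provided $b - t \le a - t$ and $b-t \le c-t$ (automatic) and its determinant $(a-t)(c-t) - (b-t)^2 = \det A + t(2b - a - c) + \ldots$ stays nonnegative — this restricts how large $t$ can be, but $t=1$ or $t=2$ will always be admissible after a further reduction. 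Better still, I would split off a genuinely rank-one \emph{nondiagonal} block only when its diagonal gcd is controllable; the cleanest route is: write $b = $ (something not of the form \eqref{eq:four}) and push the correction into the diagonal entries, then invoke Lemma \ref{lem:three}, which tells us that perturbing a bad integer by $\pm 2$ or $\pm 6$ makes it good. So from one bad value $b$ we move to $b \mp 2$ (good) while the diagonal pieces change by a compensating amount, and we repeat until at most one of the three summands is bad.

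The main obstacle I anticipate is the bookkeeping of the perturbations: when I change $b$ I must simultaneously change $a-b$ and $c-b$ to keep $A$ fixed, so I cannot independently force all three to be good, and I must also preserve double nonnegativity of every summand (in particular nonnegativity of the off-diagonal entry and of the determinant of any non-rank-one intermediate matrix). The key combinatorial input that makes this work is precisely Lemma \ref{lem:three}: the set of residues mod $8$ forcing four squares is small, and a shift by $2$ always escapes it, so a single well-chosen shift turns two of the three pieces good at the cost of making at most one bad, yielding $3 + 3 + 4 = 10$, or in the worst residual case $3+3+4=10 \le 11$; a careful case analysis on the residue of $b$ modulo $8$ and the $4$-adic valuations of $a-b$, $b$, $c-b$ should close the remaining gap and give the bound $11$.
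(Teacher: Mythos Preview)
Your plan has the right shape and invokes the right lemma, but it stops exactly at the point where the real idea is needed. The obstacle you name yourself is fatal for the mechanism you describe: if the only rank-one ``off-diagonal'' block you allow is a multiple $tJ$ of the all-ones matrix, then subtracting $tJ$ and redecomposing simply reproduces $bJ+(a-b)E_{11}+(c-b)E_{22}$, because $tJ+(b-t)J=bJ$. Likewise, your suggestion to ``replace $b$ by $b\mp 2$ and push the correction into the diagonal'' does not preserve $A$: the matrix $(b-2)J+(a-b+2)E_{11}+(c-b+2)E_{22}$ has off-diagonal entry $b-2$, not $b$. So the shifts by $\pm2$ coming from Lemma~\ref{lem:three} cannot be realised by $J$-blocks alone, and the ``careful case analysis'' you defer to has no tool to work with.

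What the paper does---and what is missing from your proposal---is to peel off a single rank-one block $\npmatrix{p\\q}\npmatrix{p&q}$ with $p\ne q$. Such a block contributes $pq$ to the off-diagonal but $p^2$ and $q^2$ to the diagonal, so after also removing $(b-pq)J$ the residual diagonal entries become $(a-b)-p(p-q)$ and $(c-b)+q(p-q)$: the shifts on the two diagonal pieces are now \emph{different}, breaking the coupling that blocks your approach. With $(p,q)=(3,2)$ one gets shifts $-3,+2$ on the diagonal pieces and $-6$ on $b$; with a $(2,1)$-type block one gets shifts $\pm1,\mp2$ and $-2$ on $b$. Combined with Lemma~\ref{lem:three} and a short case split on whether $a-b\equiv 7\pmod 8$, this makes all three scalar pieces expressible as sums of three squares, plus the one extra rank-one block, giving $3+3+3+1=10\le 11$. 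Your write-up never introduces these asymmetric blocks, so as it stands it does not prove the bound.
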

  
\begin{proof}
 From \eqref{eq:bound12} and Theorem \ref{thm:foursquares} it is clear that the bound $12$ will not be reached unless $b$, $a-b$ and $c-b$ are all of the form  \eqref{eq:four}. In particular, the result holds for $b \leq 6$, and for $a-b \leq 6$ or $c-b \leq 6$. So we assume $b$, $a-b$ and $c-b$ are all greater than or equal to $7$, and that they all require four squares in Theorem \ref{thm:foursquares}.

First let us consider the case when $a-b \not\equiv 7 \pmod{8}$. In this case $a-b-3\equiv 1 \pmod{8}$ or $a-b-3\equiv 5 \pmod{8}$, so $a-b-3$ is not of the form \eqref{eq:four}. We write: 
 \begin{align*}
 A=\npmatrix{a-b-3 & 0 \\ 0 & c-b+2}+(b-6) \npmatrix{1 & 1 \\ 1 & 1}+\npmatrix{3 \\ 2} \npmatrix{3 & 2}. 
 \end{align*}
Under our assumption, we can write each $a-b-3$, $c-b+2$ and $b-6$ as sums of at most three squares by Lemma \ref{lem:three}, so the integer completely positive rank of $A$ is at most $3+3+3+1=10$. The case, when  $c-b \not\equiv 7 \pmod{8}$ can be dealt with in a similar way. 
 
 Now we assume that $a-b \equiv 7 \pmod{8}$ and $c-b \equiv 7 \pmod{8}$. We write: 
 \begin{align*}
 A=\npmatrix{a-b-1 & 0 \\ 0 & c-b+2}+(b-2) \npmatrix{1 & 1 \\ 1 & 1}+\npmatrix{1 \\ 2} \npmatrix{1 & 2}. 
 \end{align*}
 Since $a-b-1 \equiv 6 \pmod{8}$ it is not of the form \eqref{eq:four}, and $c-b+2$ and $b-2$ are not of the form \eqref{eq:four} by Lemma \ref{lem:three}, the integer completely positive rank of $A$ is at most $3+3+3+1=10$. 
\end{proof}  

Next example shows that the integer cp-rank of a $2 \times 2$ matrix can be as high as $9$, but we were not able to find examples of $2 \times 2 $ matrices with cp-rank larger than that. 

\begin{example}
Let $$A=\npmatrix{a & 1 \\ 1 & c},$$ where $a$ and $c$ are positive integers. Then any integer cp-factorization of $A$ must involve $\npmatrix{1 \\ 1} \npmatrix{1 & 1}$, so the integer cp-rank of $A$ is $1+p+q$, where $p$, $q$ are the least number of squares of integers needed to represent $a-1$, $c-1$, respectively. In particular, $A$ has the integer cp-rank $9$ if $a$ and $c$ are both divisible by $8$.    
\end{example}
    
\begin{example}
Let $$B=\npmatrix{a & 2 \\ 2 &c}$$
with integers $c \geq a \geq 2$. The decomposition 
$$B=\npmatrix{a-2 & 0 \\ 0 & c-2}+2 \npmatrix{1 & 1 \\ 1 & 1}$$
shows that the integer cp-rank of $B$ is at most $4+3+1=8$, unless both $a-2$ and $c-2$ are of the form \eqref{eq:four}. But if $a-2$ is of the form \eqref{eq:four} $a-4$ is not, by Lemma \ref{lem:three}. In this case the decomposition 
$$B=\npmatrix{a-4 & 0 \\ 0 & c-1}+\npmatrix{2 \\ 1}\npmatrix{2 &1}$$
shows that the integer cp-rank of $B$ is at most $3+4+1=8$. We conclude that the integer cp-rank of all such $B$ is at most $8$. 
\end{example} 

\bibliographystyle{plain}
\bibliography{../Bib/CP}


\end{document}